\newtheorem{theorem}{Theorem}
\newtheorem{corollary}[theorem]{Corollary}
\newtheorem{lemma}[theorem]{Lemma}
\newtheorem{remark}[theorem]{Remark}
\newtheorem{conjecture}[theorem]{Conjecture}
\newtheorem{definition}[theorem]{Definition}
\newcommand{\1}{{\bf 1}}
\newcommand{\y}{{\bf y}}
\newcommand{\x}{{\bf x}}
\newcommand{\z}{{\bf z}}
\newcommand{\uu}{{\bf u}}
\newcommand{\w}{{\bf w}}
\newcommand{\s}{{\cal S}}
\newcommand{\e}{\epsilon}
\title{Signed graphs with maximal index}
\author{ Ebrahim Ghorbani$^{\,\rm a,b,}$\thanks{Corresponding author, e\_ghorbani@ipm.ir}
\qquad 	Arezoo Majidi$^{\,\rm a}$   \\[.4cm]
	{\sl $^{\rm a}$Department of Mathematics, K. N. Toosi University of Technology,}\\
	{\sl P. O. Box 16765-3381, Tehran, Iran}\\
	{\sl $^{\rm b}$Department of Mathematics, University of Hamburg, }\\
	{\sl Bundesstra\ss e 55 (Geomatikum), 20146 Hamburg, Germany }}
\begin{document}

\maketitle

\begin{abstract}
	The index of a signed graph is the largest eigenvalue of its adjacency matrix.
For positive integers $n$ and $m\le n^2/4$, we determine the maximum index of complete signed graphs with $n$ vertices and $m$ negative edges and characterize
the signed graphs achieving this maximum. This settles (the corrected version of) a conjecture by  Koledin and Stani\'c (2017).

		\vspace{5mm}
	\noindent {\bf Keywords:} Signed graph, index, Seidel matrix, spectral radius  \\[.1cm]
	\noindent {\bf AMS Mathematics Subject Classification\,(2010):} 05C50, 05C22
\end{abstract}

\section{Introduction}

A {\em signed graph} $\Gamma=(G,\sigma)$ consists of a simple graph $G$ together with a function $\sigma$ assigning a $+1$ or $-1$ to each edge of $G$. The (unsigned) graph $G$ is said to be the {\em underlying graph} of $\Gamma$, while the function $\sigma$
is called the {\em signature} of $\Gamma$.
For a simple graph $G$ with vertex set $\{v_1, \ldots ,v_n \}$, the {\em adjacency matrix} $A(G)=(a_{ij})$ is an $n \times n$ symmetric matrix with $a_{ij}=1$ if $v_i$ and $v_j$ are adjacent, and $a_{ij}=0$ otherwise.
In signed graphs, edge signs are usually interpreted as $\pm1$. In this way, the
adjacency matrix $A(\Gamma)$ is naturally defined following that of unsigned graphs, that is
by putting $+1$ or $-1$ whenever the corresponding edge is either positive or negative, respectively.
As $A(\Gamma)$ is a real symmetric matrix, its eigenvalues are all real numbers.
The {\em index} of the signed graph $\Gamma$ is the largest eigenvalue of $A(\Gamma)$.
The {\em spectral radius} of $\Gamma$ is the largest absolute value of the eigenvalues of $A(\Gamma)$.
These two coincide when the absolute values of the eigenvalues of $A(\Gamma)$ do not exceed its index. This is the case for unsigned graphs by the Perron--Frobenius theorem (see  \cite[Theorem 2.2.1]{bh}), a property that does not hold in general for signed graphs.

Several questions about signed graphs with extremal spectral radius  have been studied in the literature.
A natural question is to identify which signature leads to the minimum
spectral radius \cite{bckw}. This problem has important connections and consequences in the theory of expander graphs. Bilu and Linial \cite{bl} conjectured that every connected $d$-regular graph has a signature
with spectral radius at most $2\sqrt{d-1}$. If true, this conjecture would imply the existence of an infinite family of $d$-regular Ramanujan graphs.  Marcus, Spielman
and Srivastava~\cite{mss} proved the Bilu--Linial conjecture for bipartite graphs.
A similar problem for the $n$-dimensional hypercubes $Q_n$ is also of particular interest.
As $Q_n$ is an $n$-regular graph with $2^n$ vertices, for any signature $\sigma$, the
sum of the squares of the eigenvalues of $A(Q_n,\sigma)$ is equal to  trace$(A(Q_n,\sigma)^2)=n2^n$. It follows that spectral radius of $(Q_n,\sigma)$ is at least $\sqrt n$.
Recently, Huang \cite{h} constructed a signed adjacency matrix of $Q_n$
with spectral radius $\sqrt n$,  from which he concluded that every induced subgraph of $Q_n$ on more than $2^{n-1}$ vertices has maximum degree at least $\sqrt n$. This led to a breakthrough proof of the Sensitivity Conjecture from
theoretical computer science.

In this paper, we deal with signed graphs with maximal index.
To be more precise, we consider the problem of identifying the signed graphs with maximal index among the complete signed graphs with a fixed number of vertices and number of negative edges.
 This problem was initiated in \cite{ks}, where the following conjecture was posed. Here, as usual, the notation  $K_{r,n-r}$ denotes the complete bipartite graph with parts consisting of $r$ and $n-r$ vertices.

\begin{conjecture}[Koledin and Stani\'c \cite{ks}] \label{conj} The complete signed graph with $n$ vertices
	and $m\le\lfloor n^2/4\rfloor$ negative edges that maximizes the index is as follows:
	\begin{itemize}
		\item[\rm(i)] If $m < n - 1$, then negative edges induce the star $K_{1,m}$.
		\item[\rm(ii)] Otherwise, let $r$ with $r\le \lfloor n/2\rfloor$ be the largest integer that satisfies $r(n-r)\le m$.
		\begin{itemize}
			\item[\rm(ii.a)] If $r(n-r) = m$, then negative edges induce the complete bipartite graph
			$K_{r,n-r}$.
			\item[\rm(ii.b)] Otherwise, negative edges induce a bipartite graph with $r + 1$ vertices in
			one and $n-r-1$ vertices in the other part so that all but one vertex in the first
			part are adjacent to all vertices in the other.
		\end{itemize}
	\end{itemize}
\end{conjecture}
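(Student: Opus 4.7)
My plan is to reformulate the question via the Seidel matrix, derive a necessary ``edge placement'' condition on the extremal configuration from an edge-swap Rayleigh quotient argument, and then combine it with the eigenvalue equation to narrow the search to the three conjectured families.

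\textbf{Seidel reformulation and edge-swap ordering.} For a complete signed graph $\Gamma=(K_n,\sigma)$ with negative-edge graph $H$, the equality $A(\Gamma)=J-I-2A(H)$ identifies $A(\Gamma)$ with the Seidel matrix of $H$, so the task becomes: maximise $\lambda_1(J-I-2A(H))$ over graphs $H$ on $n$ vertices with $m$ edges. Let $\Gamma^*$ be extremal with index $\lambda$ and unit eigenvector $x$. If I swap a negative edge $\{i,j\}$ with a positive edge $\{k,l\}$, the resulting signed graph is still admissible and its Rayleigh quotient at $x$ equals $\lambda+4(x_ix_j-x_kx_l)$. Optimality therefore forces
\[
x_ix_j\le x_kx_l\qquad\text{for all }\{i,j\}\in E(H^*)\text{ and }\{k,l\}\notin E(H^*),
\]
so, up to ties, $E(H^*)$ is exactly the set of the $m$ pairs $\{i,j\}$ with smallest products $x_ix_j$.

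\textbf{Eigenvector structure and the three cases.} Evaluating $A(\Gamma^*)x=\lambda x$ at an isolated vertex $i$ of $H^*$ gives $(\lambda+1)x_i=\1^\top x$, so all isolated vertices share a common entry $c$; more generally, twin vertices of $H^*$ share their $x$-values. Partition the vertex set by sign of the eigenvector into $P=\{x_i>0\}$, $Z=\{x_i=0\}$, $N=\{x_i<0\}$. The edge-swap ordering fills negative-product pairs (which all lie in $P\times N$) first, then zero-product pairs (those meeting $Z$), then positive-product pairs. This already delivers the three conjectured shapes: when $m=r(n-r)$ the bipartite block $P\times N$ is exactly saturated, giving (ii.a); when $r(n-r)<m<(r+1)(n-r-1)$ the excess must concentrate on a single vertex by a twin-collapsing argument, giving (ii.b); and when $m<n-1$ the star of (i) emerges from the edge-swap ordering as soon as one knows that $x$ has a unique negative entry.

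\textbf{Main obstacle.} The crux is to force the correct sign pattern on $x$, most notably $|N|=1$ in case (i) and $|N|=r$ in (ii). The edge-swap argument alone is consistent with several bipartite shapes for $H^*$, so a genuine eigenvalue comparison is required. My plan is to reduce each case to the quotient matrix determined by the sign-class/twin partition (at most three or four classes per scenario) and then to compare Perron roots via explicit inequalities on small characteristic polynomials. Case~(i) will be the most delicate: one must rule out bipartite $H^*$ with $|N|\ge 2$ on $m<n-1$ edges by showing that its index is strictly smaller than the index of the star $K_{1,m}$, which I expect to reduce to a one- or two-parameter polynomial inequality that can be verified by direct calculation. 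A parallel but subtler analysis, using the four-class quotient (two types on each side of the bipartition plus the pendant vertex), should finish case (ii.b).
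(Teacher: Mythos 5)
There is a fundamental problem before any technical gap: the statement you are trying to prove is false as stated, and the paper's main contribution (Theorem~\ref{thm:main}) is precisely to correct and then prove it. Part (ii.b) of the conjecture fails whenever $r(n-r)<m<(r+1)(n-r-1)$ and $(r+1)(n-r-1)-m>m-r(n-r)$: in that range the extremal negative-edge graph is $K_{r,n-r}$ with a star $K_{1,t}$, $t=m-r(n-r)$, added \emph{inside} one part (the graph $H_{n,m}$ with $d=r$), which contains triangles and is therefore not the bipartite graph of (ii.b). Concretely, for $n=10$, $m=17$ (so $r=2$, $a=1$, $b=4$), the maximizer is $K_{2,8}$ plus one edge inside the part of size $8$, and by Lemma~\ref{lem:t<t'}\,(ii) its index $\rho(S_{10,1})$ strictly exceeds the index $\rho(S_{10,4})$ of the graph predicted by (ii.b). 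Consequently the step in your outline where ``the excess must concentrate on a single vertex\dots giving (ii.b)'' cannot be carried out: when $a<b$ the excess edges sit inside one side of the bipartition $P\times N$, not on an extra vertex of an enlarged bipartite block, and any argument that concludes (ii.b) in full generality is attempting to prove a false statement.

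Beyond that, even for the true parts the proposal is a plan rather than a proof. Your edge-swap inequality $x_ix_j\le x_kx_l$ and the isolated-vertex relation $(\lambda+1)x_i=\1^\top x$ are correct and close in spirit to the paper's Remark~\ref{rem:NumNeg} and the local moves inside Theorem~\ref{thm:star}, but the step you yourself flag as the crux --- pinning down the sign pattern of the eigenvector ($|N|=1$ in case (i), $|N|=r$ or $r+1$ in case (ii)) --- is exactly what you do not do, and the promised Perron-root comparisons of three- and four-class quotient matrices are only ``expected'' to work. Note also that for $m<(n-1)/2$ the extremal eigenvector is all positive (Lemma~\ref{lem:t<t'}\,(i)), so the premise ``as soon as one knows that $x$ has a unique negative entry'' is not even the right target in case (i). The paper sidesteps the sign-pattern determination entirely: writing $s=|N|$, it compares $\x\s(H)\x^\top$ with $\x'\s(H')\x'^\top$ for the symmetric-difference graph with $K_{P,N}$, invokes Theorem~\ref{thm:star} to bound this by $\rho(S_{n,a})$ when $s\le r$ and by $\rho(S_{n,b})$ when $s\ge r+1$, and then uses the monotonicity Lemma~\ref{lem:t<t'} to conclude that the maximum is $\rho(S_{n,t})$ with $t=\min(a,b)$ --- which is exactly where the corrected answer, and the failure of (ii.b), comes from. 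You would need both to adopt the corrected statement and to supply proofs of the comparison steps for your approach to go through.
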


The main purpose of this paper is to prove Conjecture~\ref{conj}.
However, as we shall see in Theorem~\ref{thm:main} below, the part (ii.b) of the conjecture is not correct as stated.
In fact, in the case that $r(n-r)< m<(r+1)(n-r-1)$ where
\begin{equation}\label{eq:m}
(r+1)(n-r-1)-m>m-r(n-r),
\end{equation}
 the complete signed graph with maximal index is different from the one predicted in (ii.b).

For an unsigned graph $G$, as usual we use $V(G)$ and $E(G)$ to denote the vertex set and edge set of $G$, respectively. $|V(G)|$ and $|E(G)|$ are called the {\em order} and the {\em size} of $G$, respectively.

\begin{definition}\label{def:Hm}\rm
Let $n$ be a positive integer and $m\le\lfloor n^2/4\rfloor$.
Let $d(n-d)$ with $d\le\lfloor n/2\rfloor$ be the closest integer among
 $$0,\,n-1,\,2(n-2),\,3(n-3),\ldots,\lfloor n/2\rfloor\lceil n/2\rceil$$
 to $m$, and
$t:=|m-d(n-d)|$. We define a graph $H_{n,m}$ as follows.
If $m< d(n-d)$, $H_{n,m}$ is a graph obtained  by removing the edges of an star $K_{1,t}$ from $K_{d,n-d}$.
If $m\ge d(n-d)$, $H_{n,m}$  is a graph obtained by adding the edges of an star $K_{1,t}$ into one of the parts of $K_{d,n-d}$.
\end{definition}

In particular, if $m=d(n-d)$, then $H_{n,m}=K_{d,n-d}$, and if $m<n-1$, $H_{n,m}=K_{1,m}\cup(n-m-1)K_1$. In the remaining cases, up to isomorphism, there are
two choices for $H_{n,m}$, and either of them are referred to as $H_{n,m}$.

If the negative edges of a complete signed graph $\Gamma$ induce the (unsigned) graph $H$, we also use the notation $(K_n,H)$ to specify $\Gamma$.

Here is the main result of the paper.
\begin{theorem}\label{thm:main} Among the complete signed graphs with $n$ vertices
	and $m$ negative edges, $(K_n,H)$ has the maximum index if and only if $H$ is isomorphic to a $H_{n,m}$.
\end{theorem}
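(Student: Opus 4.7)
The approach is to recast the theorem as a Seidel-matrix eigenvalue optimization and then pin down the extremal graph via an edge-swap analysis steered by the top eigenvector. Writing $\Gamma=(K_n,H)$ with $H$ the (unsigned) graph of negative edges, one has $A(\Gamma)=A(K_n)-2A(H)=J-I-2A(H)$, which is the Seidel matrix $S(H)$ of $H$. Hence Theorem~\ref{thm:main} is equivalent to showing that, among all graphs $H$ on $n$ vertices with $m$ edges, the largest Seidel eigenvalue $\lambda_{\max}(S(H))$ is maximized precisely when $H\cong H_{n,m}$.

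For the structural analysis, fix an extremal $H$ and a unit top eigenvector $\mathbf{x}$, so that
$$\lambda_{\max}(S(H))=\mathbf{x}^T S(H)\mathbf{x}=(\mathbf{1}^T\mathbf{x})^2-1-2\,\mathbf{x}^T A(H)\mathbf{x}.$$
Order the vertices by $x_1\ge x_2\ge\cdots\ge x_n$ and set $V^+=\{i:x_i>0\}$, $V^-=\{i:x_i<0\}$, $V^0=\{i:x_i=0\}$. The key step is an edge-swap principle: if $ij\in E(H)$ and $kl\notin E(H)$ satisfy $x_ix_j>x_kx_l$, then $H'=H-ij+kl$ has $\mathbf{x}^T S(H')\mathbf{x}>\mathbf{x}^T S(H)\mathbf{x}=\lambda_{\max}(S(H))$, hence $\lambda_{\max}(S(H'))>\lambda_{\max}(S(H))$, contradicting maximality. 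Combined with the eigenvalue equations $(\lambda_{\max}+1)x_i=\mathbf{1}^T\mathbf{x}-2\sum_{j\in N_H(i)}x_j$, this forces that every edge of $H$ runs between $V^+$ and $V^-$, that $V^0$ consists of isolated vertices of $H$, and that the neighborhoods of vertices in $V^+$ (resp.\ $V^-$) are nested with respect to the ordering of $|x_i|$. Thus $H$ is a threshold bipartite graph, and its possible shapes coincide with those allowed by Definition~\ref{def:Hm}.

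Once $H$ is known to be a threshold bipartite graph with parts of sizes roughly $d$ and $n-d$, the problem reduces to a finite optimization. For each candidate $d\le\lfloor n/2\rfloor$ one computes $\lambda_{\max}(S(H))$ from the quotient matrix on an equitable partition into three or four orbits, obtaining a low-degree characteristic polynomial in $\lambda$ whose coefficients depend on $n,m,d$ and $t=|m-d(n-d)|$. The plan is then to prove two strict monotonicities: (i) for fixed $d$, the largest root is strictly decreasing in $t$; and (ii) the optimal $d$ is the one making $d(n-d)$ closest to $m$. These single out $H_{n,m}$ and explain why Conjecture~\ref{conj}(ii.b) fails precisely in the regime~\eqref{eq:m}: there $m$ is closer to $(r+1)(n-r-1)$ than to $r(n-r)$, so the optimal bipartition has parts of sizes $r+1$ and $n-r-1$ with a star deleted, rather than parts $r$ and $n-r$ with a star added.

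The main obstacle is making the swap argument rigorous when the top eigenvector has repeated or zero entries, or when $\lambda_{\max}(S(H))$ is a multiple eigenvalue. Unlike the Perron--Frobenius setting for nonnegative matrices, $S(H)$ admits top eigenvectors with arbitrary sign pattern and without any uniqueness, so the strict-improvement phase must be supplemented with a careful equality-case analysis of "twin" vertices sharing an $x$-coordinate and with a second-derivative-type check when the immediate swap gain vanishes. A secondary technical difficulty is the final scalar comparison: the monotonicities (i) and (ii) require controlling the largest root of small polynomials whose coefficients vary with $n,m,d,t$, and this control must be uniform across the full range $m\le\lfloor n^2/4\rfloor$ rather than merely asymptotic.
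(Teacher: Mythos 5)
Your outline reproduces the paper's starting point (index of $(K_n,H)$ equals the largest eigenvalue of the Seidel matrix $\s(H)$, edge moves steered by a top eigenvector), but the central structural claim is wrong as stated, and the two steps you defer are exactly where the real work lies. You assert that the swap principle plus the eigenvalue equations force every edge of $H$ to run between $V^+$ and $V^-$. This contradicts the extremal graph itself: when $d(n-d)<m$, the maximizer $H_{n,m}$ is $K_{d,n-d}$ with a star $K_{1,t}$ added \emph{inside} one part, and since $H_{n,m}$ is switching equivalent to $S_{n,t}$ (Lemma~\ref{lem:index}) with an all-positive top eigenvector for $t<(n-1)/2$ (Lemma~\ref{lem:t<t'}), the top eigenvector of $\s(H_{n,m})$ is negative exactly on one part of $K_{d,n-d}$; the $t$ star edges therefore join vertices whose eigenvector entries have the \emph{same} sign. (More simply, if $m>|V^+|\,|V^-|$ there are not enough mixed pairs.) All the swap principle can give is that the edges occupy pairs with the $m$ smallest products $x_ix_j$; passing from that nested ("threshold-like") structure to the very specific shapes of Definition~\ref{def:Hm} — complete bipartite plus or minus a \emph{single} star — is precisely the hard part, and your fixed-eigenvector swap cannot finish it, because in tie situations the gain $x_ix_j-x_kx_l$ vanishes and no strict improvement is available. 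The paper handles exactly this obstruction in Theorem~\ref{thm:star} by perturbing the vector itself (the $\e$-modified $\y$), not just the graph, and you name this difficulty ("equality-case analysis", "second-derivative-type check") without supplying the argument.

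The second gap is the final comparison across $d$ and $t$. Your plan requires proving that the largest root of a quotient-matrix characteristic polynomial (with three or four classes, coefficients depending on $n,m,d,t$) is monotone in $t$ for fixed $d$ and optimized at the $d$ with $d(n-d)$ closest to $m$, uniformly for all $m\le\lfloor n^2/4\rfloor$; you flag this as a "secondary technical difficulty" but do not carry it out, and it is not routine. The paper avoids it entirely: Seidel switching collapses every candidate $K_{d,n-d}\pm K_{1,t}$ to the one-parameter family $S_{n,t}$, whose Seidel spectrum reduces to a single cubic (Theorem~\ref{thm:index}), and the cross-$d$ comparison becomes the elementary monotonicity of Lemma~\ref{lem:t<t'}\,(ii). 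Moreover, the paper's route from an arbitrary extremal $H$ to these star graphs is not your swap-classification at all, but the symmetric-difference observation of Remark~\ref{rem:NumNeg} together with Theorem~\ref{thm:star}: the negative contributions to $\x\s(H)\x^\top$ correspond to $E(K_{P,Q})\,\Delta\,E(H)$, which contains at least $\min(a,b)<n-1$ edges, and this yields $\rho(H)\le\rho(S_{n,\min(a,b)})=\rho(H_{n,m})$ directly, with the equality analysis then recovering $H\cong H_{n,m}$. Without switching, the symmetric-difference reduction, and the perturbation argument of Theorem~\ref{thm:star}, your proposal does not yet constitute a proof.
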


Theorem~\ref{thm:main} settles (the corrected version of) Conjecture~\ref{conj}. Note that, the parameter $d$ of Definition~\ref{def:Hm} is equal to either  $r$ of Conjecture~\ref{conj} or to $r+1$.  When $d=r+1$, then the maximal graph suggested in Conjecture~\ref{conj}  is $(K_n,H_{n,m})$. However, if
$d=r$ which is the case when \eqref{eq:m} holds, then the maximal graph predicted in Conjecture~\ref{conj}  is different from $(K_n,H_{n,m})$ and it has smaller index than $(K_n,H_{n,m})$.

  We remark that, the special case of Conjecture~\ref{conj} when the negative edges induce a tree was proved by Akbari {\em et al.} \cite{adhm}.

As a corollary, we will obtain the following quantified version of Theorem~\ref{thm:main}.

\begin{corollary}\label{cor:index}
	Let $n$ be an integer and $m\le\lfloor n^{2}/4\rfloor$. The largest index of complete signed graphs with $n$ vertices and $m$ negative edges is equal to $n-1-\xi,$ where $\xi$ is the smallest real satisfying
	$\xi(n-\xi)^2=4t(n-1-t)$ with $t:=\min_{0\le j\le \lfloor n/2\rfloor}|m-j(n-j)|$.
	In particular,
	\begin{equation}\label{eq:<xi<}
	\frac{4t(n-1-t)}{n^2}\le\xi\le \frac{4t(n-1-t)}{(n-1)^2}.
	\end{equation}
\end{corollary}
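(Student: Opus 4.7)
The plan is, via Theorem~\ref{thm:main}, to identify a concrete extremal signed graph, simplify it by a switching, and then extract its index from a $3\times 3$ equitable-partition matrix before analysing the resulting cubic.

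By Theorem~\ref{thm:main}, the largest index is attained at $(K_n,H_{n,m})$. The first step would be to show that $(K_n,H_{n,m})$ is switching-equivalent, hence cospectral, to $(K_n,K_{1,t})$. By Definition~\ref{def:Hm}, $H_{n,m}$ is obtained from $K_{d,n-d}$ either by deleting a star $K_{1,t}$ (if $m<d(n-d)$) or by inserting a star $K_{1,t}$ inside one part (if $m\ge d(n-d)$). I would switch at every vertex of one part of the underlying bipartition $A\sqcup B$; this flips the sign of every $A$--$B$ edge and preserves all intra-part edges. In the ``add'' case, the $d(n-d)$ originally negative $A$--$B$ edges all become positive, leaving exactly the $t$ star edges inside the part as the negative ones. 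In the ``delete'' case, the $t$ deleted-star edges were the only positive $A$--$B$ edges in $(K_n,H_{n,m})$, so after switching they become the only negative edges, again forming a $K_{1,t}$. Thus in both cases the switched graph is $(K_n,K_{1,t})$.

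It remains to compute the index of $(K_n,K_{1,t})$. Assuming $t\ge 1$ (the case $t=0$ reduces to $K_n$, with index $n-1=n-1-0$), consider the equitable partition $V(K_n)=\{v_0\}\cup L\cup R$, where $v_0$ is the star's center, $L$ its $t$ leaves, and $R$ the remaining $n-1-t$ vertices. The signed row-sums from each vertex to each class depend only on the vertex's class, yielding the $3\times 3$ quotient matrix
\[
B=\begin{pmatrix}0&-t&n-1-t\\-1&t-1&n-1-t\\1&t&n-2-t\end{pmatrix},
\]
whose three eigenvalues are eigenvalues of the signed adjacency matrix $S$. On the orthogonal complement of the span of the three class indicators, $S$ acts as $-I$, contributing $n-3$ further eigenvalues equal to $-1$. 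A direct expansion (which I would cross-check at $t=0$ against the spectrum $\{n-1,\,-1^{(n-1)}\}$ of $K_n$) gives
\[
\det(\lambda I-B)=(\lambda-(n-1))(\lambda+1)^{2}+4t(n-1-t),
\]
so the substitution $\lambda=n-1-\xi$ turns the characteristic equation into $\xi(n-\xi)^{2}=4t(n-1-t)$. Since $\xi\mapsto n-1-\xi$ is order-reversing, the index ($=$ largest $\lambda$) corresponds to the smallest real root $\xi^{*}$ of this cubic.

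For the inequalities~\eqref{eq:<xi<}, set $f(\xi)=\xi(n-\xi)^{2}$ and $c=4t(n-1-t)$. By AM--GM applied to $t+(n-1-t)=n-1$, one has $c\le(n-1)^{2}$, and an elementary check gives $(n-1)^{2}\le 4n^{3}/27$ for $n\ge 3$. The function $f$ is strictly increasing on $(0,n/3)$ with $f(0)=0$ and $f(1)=(n-1)^{2}\ge c$, so its smallest real root $\xi^{*}$ lies in $[0,1]$. This forces $(n-1)^{2}\le(n-\xi^{*})^{2}\le n^{2}$, and substituting into $\xi^{*}=c/(n-\xi^{*})^{2}$ yields the claimed two-sided bound; the small cases $n\le 2$ (where $m\le n^{2}/4$ forces $t=0$) are immediate.

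The main obstacle is recognising the switching equivalence $(K_n,H_{n,m})\sim(K_n,K_{1,t})$, since both the ``add'' and ``delete'' constructions in Definition~\ref{def:Hm} collapse to the same switching orbit; once this reduction is in hand, the remainder is a short $3\times 3$ determinant computation followed by elementary analysis of a cubic on a bounded interval.
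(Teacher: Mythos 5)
Your proposal is correct and follows essentially the same route as the paper: Theorem~\ref{thm:main} reduces the problem to $(K_n,H_{n,m})$, which is switching-equivalent to the star configuration $S_{n,t}$ (the paper's Lemma~\ref{lem:index}), and the index is then extracted from the same $3\times 3$ equitable-partition quotient matrix and cubic as in Theorem~\ref{thm:index}. Your two small departures --- obtaining the eigenvalue $-1$ with multiplicity $n-3$ directly (the Seidel matrix acts as $-I$ on the orthogonal complement of the class indicators) rather than by interlacing, and deducing \eqref{eq:<xi<} from $\xi\in[0,1]$ together with $\xi=4t(n-1-t)/(n-\xi)^2$ rather than checking the signs of $g$ at the two endpoints --- are valid and slightly cleaner, but do not constitute a genuinely different approach.
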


The rest of the paper is organized as follows. 
We determine the index of the signed graph $(K_n,H_{n,m})$ in Section~\ref{sec:maxindx}.
Theorem~\ref{thm:main} will be proved in Section~\ref{sec:maxgraph}.

\section{The maximal index}\label{sec:maxindx}

In this section, we determine the index of the signed graph $(K_n,H_{n,m})$.
In the next section, we will prove that $(K_n,H_{n,m})$ has the largest index among complete signed graphs with $n$ vertices and $m$ negative edges which establishes the maximality of the index of $(K_n,H_{n,m})$.

Let $H$ be a simple graph with vertex set $V=V(H)=\{v_1,\ldots,v_n\}$.  The {\em Seidel matrix} $\s(H)=(s_{ij})$ of $H$ is an $n\times n$ matrix where $s_{11}=\cdots=s_{nn}=0$ and for $i\ne j$,
$s_{ij}$ is $-1$ if $v_i$ and $v_j$ are adjacent, and is
$1$ otherwise.  If $\Gamma=(K_n,H)$, that is the complete signed graph whose negative edges induce the unsigned graph $H$, then $$A(\Gamma)=J-I-2A(H)=\s(H),$$
in which $J$ and $I$ are the all ones and the identity matrices of order $n$, respectively.
So the adjacency matrix of $\Gamma$  coincides with the Seidel matrix  of $H$. Therefore, the index of $\Gamma$ is the same as the largest eigenvalue of $\s(H)$ which
we denote it by $\rho(H)$.
Occasionally, we also call $\rho(H)$ the {\em index} of $\s(H)$.

Let $U$ be a subset of $V$ and $U'=V\setminus U$. The {\em Seidel switching} on $H$ with respect to $U$ leaves the subgraphs induced by $U$ and $U'$ unchanged, but deletes all edges between $U$ and $U'$,
and inserts all edges between $U$ and $U'$ that were not present in $H$. Thus, if
$$\s(H) =\bordermatrix{~ & U & U'\cr U& A_1& A_2\cr U'& A_2^\top& A_3},$$
and $H'$ is the resulting graph, then
$$\s(H') =\begin{pmatrix}A_1& -A_2\\-A_2^\top& A_3\end{pmatrix}.$$
The matrices $\s(H)$ and $\s(H')$ are similar, and thus have the same eigenvalues.
The graph $H$ is said to be {\em switching equivalent} with $H'$.

For $0\le m\le n-1$, we denote the graph $K_{1,m}\cup(n-m-1)K_1$ by $S_{n,m}$. In particular, $S_{n,0}=\overline K_n$, the graph with no edges.

\begin{lemma}\label{lem:index}
		Let $m\le\lfloor n^{2}/4\rfloor$ and $t:=\min_{0\le j\le \lfloor n/2\rfloor}|m-j(n-j)|$. Then
	$\rho(H_{n,m})=\rho(S_{n,t})$. In particular, if $m=d(n-d)$ for some integer $d$, then $\rho(H_{n,m})=n-1$.
	\end{lemma}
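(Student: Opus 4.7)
The plan is to establish the stronger statement that $H_{n,m}$ is Seidel switching equivalent to $S_{n,t}$. Since Seidel switching yields similar Seidel matrices (as recalled in the excerpt just above the lemma), this would immediately give that $\s(H_{n,m})$ and $\s(S_{n,t})$ share the same spectrum, hence the same index $\rho$. For the ``in particular'' clause, when $m=d(n-d)$ one has $t=0$ and $H_{n,m}=K_{d,n-d}$; the same switching argument (in the trivial case with no star removed or added) reduces $K_{d,n-d}$ to $S_{n,0}=\overline{K_n}$, whose Seidel matrix $J-I$ visibly has largest eigenvalue $n-1$.

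The key step is the choice of switching set. Let $V_1,V_2$ be the two parts of the $K_{d,n-d}$ underlying $H_{n,m}$, with $|V_1|=d$ and $|V_2|=n-d$. In the case $m<d(n-d)$, the graph $H_{n,m}$ is obtained from $K_{d,n-d}$ by deleting the edges of a star $K_{1,t}$ whose center $v$ lies in one part (say $V_1$) and whose leaves $w_1,\dots,w_t$ lie in $V_2$. I would switch on $U:=V_1$: this leaves the (edgeless) induced subgraphs on $V_1$ and on $V_2$ alone and flips every $V_1$--$V_2$ pair, so the near-complete bipartite edge set collapses to exactly the $t$ deleted pairs $\{v,w_i\}$, producing a copy of $K_{1,t}$ together with $n-t-1$ isolated vertices, i.e.\ $S_{n,t}$. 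In the case $m>d(n-d)$, the graph $H_{n,m}$ is $K_{d,n-d}$ with a star $K_{1,t}$ added inside a single part; one checks that $t+1\le n-d$ in this regime, so the added star may be taken to lie in $V_2$. Switching on $U:=V_1$ now flips the complete bipartite edge set between $V_1$ and $V_2$ to empty and preserves the internal star in $V_2$, again yielding $S_{n,t}$.

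The main obstacle is really just bookkeeping: verifying that Definition~\ref{def:Hm} does allow the added (resp.\ deleted) star to be positioned so that a single switching set simultaneously empties the bipartite edges and isolates the star, and confirming that the value $t=|m-d(n-d)|$ prescribed there coincides with $\min_{0\le j\le\lfloor n/2\rfloor}|m-j(n-j)|$. Both are immediate from the choice of $d\le\lfloor n/2\rfloor$ as the integer making $d(n-d)$ closest to $m$, together with the hypothesis $m\le\lfloor n^2/4\rfloor$. Once these routine sanity checks are in place, the lemma follows directly from the two switching reductions above.
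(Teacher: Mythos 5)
Your proposal is correct and follows essentially the same route as the paper: both prove the lemma by showing that switching on one part of the underlying $K_{d,n-d}$ turns $H_{n,m}$ into $S_{n,t}$ (flipping the bipartite edges while leaving the added or deleted star intact), and then read off $\rho(\overline K_n)=n-1$ from $\s(\overline K_n)=J-I$ for the case $t=0$. The extra bookkeeping you flag (placement of the star and the identity $t=\min_j|m-j(n-j)|$) is indeed routine and is implicit in Definition~\ref{def:Hm}, exactly as the paper treats it.
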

\begin{proof}
	By Definition \ref{def:Hm}, $H_{n,m}$ is obtained from a $K_{d,n-d}$ by adding or removing the edges of a $K_{1,t}$. Let $U$ be either of the parts of the above $K_{d,n-d}$. Then, $S_{n,t}$ can be obtained from $H_{n,m}$ by the Seidel switching with respect to
$U$.
This shows that $H_{n,m}$ and $S_{n,t}$ are switching equivalent, and thus
	$\rho(H_{n,m})=\rho(S_{n,t})$.
 If $m=d(n-d)$, then $t=0$, and so,  $\rho(H_{n,m})=\rho(\overline K_n)=n-1$ since $\s(\overline K_n)=J-I$.
\end{proof}	

In passing we remark that for any graph $H$ of order $n$,  $\rho(H)\le n-1=\rho(\overline K_n)$.
Moreover, it is straightforward to verify that if $\rho(H)=n-1$, then $H$ must be switching equivalent with $\overline K_n$.
So, from the particular case $m=d(n-d)$ of Lemma~\ref{lem:index}, the part~(ii.a) of Conjecture~\ref{conj} follows.

\begin{theorem}\label{thm:index}
	Let $n\ge3$, $m\le\lfloor n^{2}/4\rfloor$, and $t=\min_{0\le j\le \lfloor n/2\rfloor}|m-j(n-j)|$. Then
	$\rho(H_{n,m})=n-1-\xi,$ where $\xi$ is the smallest real satisfying
	$\xi(n-\xi)^2=4t(n-1-t)$.
	In particular,
	\begin{equation}\label{eq:<xi<}
	\frac{4t(n-1-t)}{n^2}\le\xi\le \frac{4t(n-1-t)}{(n-1)^2}.
	\end{equation}
\end{theorem}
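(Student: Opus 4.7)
The plan is to apply Lemma \ref{lem:index} so that the problem reduces to computing $\rho(S_{n,t})$, and then exploit the simple structure of $S_{n,t}$ via its natural equitable tripartition. I would split the vertex set of $S_{n,t}$ into the center of the star, the $t$ leaves, and the $n-t-1$ isolated vertices. One checks directly that this is equitable for $\s(S_{n,t})=J-I-2A(S_{n,t})$, with quotient matrix
$$B=\begin{pmatrix} 0 & -t & n-t-1 \\ -1 & t-1 & n-t-1 \\ 1 & t & n-t-2 \end{pmatrix}.$$
For any vector supported on the leaves (or on the isolated vertices) and summing to zero, both $J$ and $A(S_{n,t})$ annihilate it, so $\s(S_{n,t})$ acts as $-I$ on it. This accounts for $n-3$ eigenvalues equal to $-1$, and the remaining three eigenvalues are precisely those of $B$.

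Next I would compute the characteristic polynomial $p(\mu)=\det(\mu I-B)$ indirectly rather than by brute expansion. Two short evaluations give $p(-1)=p(n-1)=4t(n-1-t)$, so the monic cubic $p(\mu)-4t(n-1-t)$ vanishes at $\mu=-1$ and $\mu=n-1$. Since $\mathrm{tr}(B)=n-3$, comparing sums of roots forces the third root to be $-1$ as well, yielding the clean factorisation
$$p(\mu)=(\mu+1)^2\bigl(\mu-(n-1)\bigr)+4t(n-1-t).$$
Setting $\xi=n-1-\mu$ (so that $\mu+1=n-\xi$), the eigenvalue equation $p(\mu)=0$ becomes exactly $\xi(n-\xi)^2=4t(n-1-t)$.

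The main obstacle will be arguing that the largest eigenvalue of $B$ corresponds to the \emph{smallest} real solution $\xi$ of this cubic. I would analyse $f(\xi):=\xi(n-\xi)^2$: from $f'(\xi)=(n-\xi)(n-3\xi)$, $f$ increases on $[0,n/3]$, decreases on $[n/3,n]$, and increases on $[n,\infty)$, with $f(0)=f(n)=0$ and $f(n/3)=4n^3/27$. The algebraic identity $4n^3-27(n-1)^2=(n-3)^2(4n-3)\ge 0$, together with the elementary bound $4t(n-1-t)\le (n-1)^2$, guarantees that the horizontal line $y=4t(n-1-t)$ meets the graph of $f$ in three real points $\xi_1\le\xi_2\le\xi_3$ with $\xi_1\in[0,n/3]$, $\xi_2\in[n/3,n]$, $\xi_3\in[n,\infty)$. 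The corresponding eigenvalues $n-1-\xi_i$ are therefore ordered with $n-1-\xi_1$ the largest, proving $\rho(H_{n,m})=n-1-\xi_1$.

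Finally, the bounds in \eqref{eq:<xi<} follow from the square identity $(n-1)^2-4t(n-1-t)=(n-1-2t)^2\ge 0$. Combined with $f(1)=(n-1)^2$, this yields $f(\xi_1)\le f(1)$, and since $f$ is increasing on $[0,n/3]$ and $1\le n/3$ for $n\ge 3$, we deduce $\xi_1\le 1$. Rewriting $\xi_1(n-\xi_1)^2=4t(n-1-t)$, the upper bound in \eqref{eq:<xi<} follows from $(n-\xi_1)^2\ge (n-1)^2$ (since $\xi_1\le 1$), and the lower bound from $(n-\xi_1)^2\le n^2$ (since $\xi_1\ge 0$).
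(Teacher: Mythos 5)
Your proof is correct and follows the same skeleton as the paper's: reduce to $\rho(S_{n,t})$ via Lemma~\ref{lem:index}, pass to the quotient matrix of the degree partition (your $B$ is exactly the paper's $Q$ in \eqref{eq:Q}), and substitute $\xi=n-1-\mu$ to arrive at $\xi(n-\xi)^2=4t(n-1-t)$. Where you differ is in the supporting steps, and in each case your route is a bit cleaner. The paper gets the $(n-3)$-fold eigenvalue $-1$ by deleting the centre vertex and interlacing, whereas you exhibit the eigenvectors directly (sum-zero vectors inside a part are annihilated by $J$ and by $A(S_{n,t})$, so $\s(S_{n,t})$ acts as $-I$ on them); the paper writes out the expanded characteristic cubic, whereas you recover the factorised form $(\mu+1)^2\bigl(\mu-(n-1)\bigr)+4t(n-1-t)$ from the two evaluations $p(-1)=p(n-1)=4t(n-1-t)$ and the trace --- both evaluations check out against the paper's polynomial. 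The most substantive difference is the logical order at the end: the paper first proves \eqref{eq:<xi<} by explicitly verifying $g(\xi_1)\ge 0\ge g(\xi_2)$ (two heavy algebraic identities) and only then uses $\xi_2\le 1\le n/3$ and the derivative of $g$ to see that the root so located is the smallest one; you instead locate the three roots directly from the shape of $\xi\mapsto\xi(n-\xi)^2$ using $4t(n-1-t)\le(n-1)^2\le 4n^3/27$, identify the index with the smallest root, and then obtain \eqref{eq:<xi<} almost for free from $\xi_1\le 1$ and the defining equation, avoiding the messy evaluations. Two trivial points to add for completeness: your tripartition presupposes $t\ge 1$, so dispatch $t=0$ (where $S_{n,0}=\overline K_n$, $\rho=n-1$, $\xi=0$) separately, exactly as the paper does; and note that $n-1-\xi_1\ge n-1-n/3>-1$, so the largest eigenvalue of $B$, rather than the eigenvalue $-1$, is indeed the index of $\s(S_{n,t})$.
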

\begin{proof}
By Lemma~\ref{lem:index}, we only need to determine $\rho(S_{n,t})$.	
Note that $t\le(n-1)/2$.
If $t=0$, then $\rho(S_{n,t})=n-1$, as required.
Therefore, we assume that $t\ge1$.
The vertices of $S_{n,t}$ have degrees $t,1$, and $0$. The partition of $V(S_{n,t})$ according to these degrees gives rise to an equitable partition  (see \cite[p.~24]{bh}) for $\s(S_{n,t})$ with the quotient matrix
\begin{equation}\label{eq:Q}
Q:=\begin{pmatrix}
	0& -t& n-t- 1\\-1& t- 1& n - t - 1\\ 1& t& n -t - 2
	\end{pmatrix}.
\end{equation}
	The characteristic polynomial of $Q$ is the cubic polynomial
		$$f(x):=x^3 + (3-n)x^2 + (3-2n)x - 4t^2 + 4nt - 4t - n + 1.$$
		If we remove the central vertex of $S_{n,t}$, we obtain $\overline K_{n-1}$, with $\s(\overline K_{n-1})$ having eigenvalue $-1$ with multiplicity $n-2$. Therefore, by interlacing (see \cite[Corollary 2.5.2]{bh}), $\s(S_{n,t})$ has the eigenvalue $-1$ with multiplicity at least $n-3$.
	The polynomial $f$ has no zero $x=-1$ (in fact, $-1$ is a zero of $f$ if and only if $t=0$ or $t=n-1$ which is not the case).
	It follows that all the eigenvalues of $\s(S_{n,t})$ are the zeros of $f$ together with $-1$ with multiplicity $n-3$.
It turns out that the largest zero of $f$ is the index of $\s(S_{n,t})$. Let
\begin{equation}\label{eq:g}
g(y):=f(n-1-y)=4t(n-1-t)-y(n-y)^2.
\end{equation}
	Then $\rho(S_{n,t})=n-1-\xi$ where $\xi$ is the smallest zero of $g$.
	
	To show \eqref{eq:<xi<}, let $\xi_1$ and $\xi_2$ be the lower and the upper bounds in \eqref{eq:<xi<}, respectively.  We observe that
	$$g(\xi_1)=\frac{64}{n^6}\left(t^2 + t(1-n) +\frac{n^3}{2}\right)(t-n + 1)^2t^2\ge0,$$
	and
	$$g(\xi_2)=\frac{4}{(n - 1)^6}\Big(4t^2+(n-1)\big((2n-1)(n-1)-4t\big)\Big)(2t-n + 1)^2(t-n+1)t\le0.$$
	It follows that $g$ has a zero in the interval $[\xi_1,\xi_2]$.
	Note that the derivative of $g$ with respect to $y$ is $(n-y)(3y-n)$ which has zeros at $n$ and $n/3$.
	This means that $g$ has a zero in the interval $(n/3,n)$ and a zero greater than $n$. As $\xi_2\le1\le n/3$, we find
 that the zero of $g$ lying in $[\xi_1,\xi_2]$ is indeed the smallest zero of $g$. This completes the proof.
\end{proof}

\section{The graphs with maximal index}\label{sec:maxgraph}

This section is devoted to the proof of Theorem~\ref{thm:main}. This together with Theorem \ref{thm:index} will also imply Corollary \ref{cor:index}. Since the adjacency matrix of the signed graph $(K_n,H)$ is the same as the Seidel matrix of the unsigned graph $H$, in this section we only deal with unsigned graphs and their Seidel matrices.

We start with a lemma which shows that,  except one special case, the eigenvector for the maximal index of graphs with size $\le n^2/4$ has no zero components.
\begin{lemma}\label{lem:nonzero}
 Let $H$ be a graph such that $\s(H)$ has the largest index among the graphs of order $n$ and size $m\le\lfloor n^2/4\rfloor$.
 \begin{itemize}
 	\item[\rm(i)] If $m\ne(n-1)/2$, then $\rho(H)>n-2$ and any eigenvector corresponding to $\rho(H)$ has no zero components.
 	\item[\rm(ii)] If $m=(n-1)/2$, $n\ge4$, and  $\rho(H)$ has an eigenvector with a zero component, then $\rho(H)=n-2$ and $H$ is isomorphic with $S_{n,(n-1)/2}$.
 \end{itemize}

\end{lemma}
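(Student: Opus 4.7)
The plan is to combine Theorem~\ref{thm:index} with a short interlacing argument. The central observation is: if $\x$ is an eigenvector of $\s(H)$ for $\rho := \rho(H)$ with $x_i = 0$, then each row $k \ne i$ of the equation $\s(H)\x = \rho\x$ involves only the entries $s_{kj}$ with $j \ne i$ (since $s_{ki}x_i = 0$); hence the restriction $\x'$ of $\x$ to $V \setminus \{v_i\}$ is a nonzero eigenvector of the principal submatrix $\s(H - v_i)$ for $\rho$. Combined with Cauchy interlacing this yields $\rho(H - v_i) = \rho$, and since $H - v_i$ has $n - 1$ vertices, the remark following Lemma~\ref{lem:index} gives $\rho \le n - 2$, with equality only when $H - v_i$ is switching equivalent with $\overline K_{n-1}$.

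For part (i), I would first verify that $t := \min_j|m - j(n - j)| \ne (n-1)/2$ whenever $m \ne (n-1)/2$. The sequence $0, n-1, 2(n-2), \ldots$ has consecutive gaps $n-1, n-3, n-5, \ldots$, so its largest gap is $n-1$, occurring only between $j = 0$ and $j = 1$; thus $m = (n-1)/2$ is the unique value of $m$ with $t = (n-1)/2$. Hence $4t(n - 1 - t) < (n - 1)^2$, and the right-hand inequality in \eqref{eq:<xi<} gives $\xi < 1$, so $\rho(H) \ge \rho(H_{n,m}) = n - 1 - \xi > n - 2$. The interlacing observation above then immediately forbids zero components.

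For part (ii), with $m = (n - 1)/2$ (forcing $n$ odd and hence $n \ge 5$) we have $t = (n-1)/2$, and the defining cubic $\xi(n - \xi)^2 = (n - 1)^2$ factors as $(\xi - 1)\bigl(\xi^2 + (1 - 2n)\xi + (n - 1)^2\bigr) = 0$; a quick discriminant check shows the quadratic roots exceed $1$ for $n \ge 4$, so $\xi = 1$ is the smallest root and $\rho(H_{n,m}) = n - 2$. A zero component now pins $\rho(H) = n - 2$, and by the equality case above $H - v_i$ is switching equivalent with $\overline K_{n-1}$; since Seidel switching $\overline K_{n-1}$ on a set of size $k$ produces $K_{k, n-1-k}$, we have $H - v_i \cong K_{k, n - 1 - k}$ for some $0 \le k \le n - 1$. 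Edge counting yields $k(n - 1 - k) = m - \deg_H(v_i) \le (n - 1)/2$, which for $n \ge 5$ forces $k \in \{0, n - 1\}$ (otherwise $k(n - 1 - k) \ge n - 2 > (n - 1)/2$). Hence $H - v_i = \overline K_{n - 1}$ with $\deg_H(v_i) = (n - 1)/2$, and so $H \cong S_{n, (n-1)/2}$.

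The main obstacle I anticipate is the gap bookkeeping for $t$ together with the polynomial manipulation that identifies $\xi = 1$ as the smallest root in the borderline case---both small but requiring care. Everything else is a formal consequence of Cauchy interlacing combined with the already-recorded Seidel-switching characterization of graphs attaining $\rho = |V| - 1$.
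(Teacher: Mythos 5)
Your proposal is correct and follows essentially the same route as the paper: lower-bound $\rho(H)$ by $\rho(H_{n,m})=\rho(S_{n,t})$ via Theorem~\ref{thm:index} (with $t<(n-1)/2$ exactly when $m\ne(n-1)/2$), show a zero component forces $\rho(H)\le\rho(H-v_i)\le n-2$, and in the borderline case use the switching characterization of $\rho=n-2$ plus edge counting to pin down $H\cong S_{n,(n-1)/2}$. The only differences are cosmetic: you pass through the restricted eigenvector and Cauchy interlacing where the paper uses the Rayleigh quotient $\x\s(H)\x^\top=\x'\s(H-v_1)\x'^\top\le\rho(H-v_1)$, and you spell out the gap bookkeeping for $t$ and the factorization showing $\xi=1$ is the smallest root, which the paper leaves implicit.
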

\begin{proof}{  Let $\x=(x_1,\ldots,x_n)$ be a unit eigenvector corresponding to the index of $\s(H)$.
		With the notation of the proof Theorem~\ref{thm:index}, for some $0\le t\le (n-1)/2$, $\rho(H_{n,m})=\rho(S_{n,t})\ge n-1-\xi_2$.
		
		(i) If $t=0$, then $\rho(S_{n,t})=n-1$. If $0<t<(n-1)/2$, then $\xi_2<1$, and so $\rho(S_{n,t})>n-2$.
		 Therefore, $\rho(H)\ge\rho(H_{n,m})>n-2$.
		If some component of $\x$, say $x_1$, is zero (where $x_1$ corresponds to the vertex $v_1$), then
\begin{align}
\rho(H)&=\x\s(H)\x^\top=(x_2,\ldots,x_n)\s(H-v_1)(x_2,\ldots,x_n)^\top\nonumber\\
&\le\rho(H-v_1)\le\rho(\overline K_{n-1})=n-2,\label{eq:rho(H)}
\end{align}
		which is a contradiction. So $\x$ has no zero components.
		
		(ii) Let $m=(n-1)/2$, then $t=(n-1)/2$ and $\xi=1$ is a zero of \eqref{eq:g}. This means that $\rho(H_{n,m})=n-2$. Now, the maximality of $\rho(H)$ implies that $\rho(H)\ge n-2$. On the other hand, as $\x$ has a zero component, by \eqref{eq:rho(H)}, $\rho(H)\le\rho(H-v_1)\le n-2$. Therefore, $\rho(H-v_1)= n-2$. Thus $H-v_1$ must be switching equivalent with $\overline K_{n-1}$. It follows that $H-v_1=K_{r,n-1-r}$ for some $0\le r\le (n-1)/2$. But, given that $n\ge4$, $r(n-1-r)>(n-1)/2=m$ unless $r=0$. Thus $H-v_1=\overline K_{n-1}$. It follows that $H=S_{n,(n-1)/2}$.
}\end{proof}

We will need the following lemma in the proof of our main result.

\begin{lemma}\label{lem:t<t'}
	Let $n\ge3$.
\begin{itemize}
  \item[\rm(i)] If $0\le t<(n-1)/2$, then $\rho(S_{n,t})$ has an eigenvector with all positive components and
  if $(n-1)/2\le t<n-1$, then $\rho(S_{n,t})$ has no eigenvector with all positive components.
  \item[\rm(ii)] If $1\le t<t'$ and $t+t'< n-1$, then $\rho(S_{n,t})>\rho(S_{n,t'})$. If $t+t'=n-1$, then $\rho(S_{n,t})=\rho(S_{n,t'})$.
\end{itemize}
\end{lemma}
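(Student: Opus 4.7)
The plan is to prove both parts of the lemma using two tools already established in the proof of Theorem~\ref{thm:index}: the cubic $g(y)=4t(n-1-t)-y(n-y)^2$, whose smallest root $\xi$ satisfies $\rho(S_{n,t})=n-1-\xi$, and the three-cell equitable partition of $V(S_{n,t})$ (center, $t$ leaves, $n-t-1$ isolated vertices) whose quotient matrix $Q$ appears in~\eqref{eq:Q}.

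For part~(ii), the key observation is that $g$ depends on $t$ only through the symmetric quantity $t(n-1-t)$. This immediately gives $\rho(S_{n,t})=\rho(S_{n,n-1-t})$ and settles the case $t+t'=n-1$. For $t+t'<n-1$ with $t<t'$, a short computation yields
\[
t'(n-1-t')-t(n-1-t)=(t'-t)(n-1-t-t')>0.
\]
Since $\frac{d}{dy}\bigl(y(n-y)^2\bigr)=(n-y)(n-3y)$ is positive on $(0,n/3)$, the map $y\mapsto y(n-y)^2$ is strictly increasing on $[0,n/3]$, and the relevant smallest root $\xi$ lies in $[0,1]\subseteq[0,n/3]$ by Theorem~\ref{thm:index}. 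Hence $\xi$ strictly increases with $t(n-1-t)$, and $\rho$ strictly decreases, yielding $\rho(S_{n,t})>\rho(S_{n,t'})$.

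For part~(i), I use that the eigenvalues of $\s(S_{n,t})$ are the three roots of $f:=\chi_Q$ together with $-1$ of multiplicity $n-3$ (the latter arising from vectors that vanish on the center and have zero sum on each of the other two cells). In particular, $\rho$ is the largest root of $f$, and when $\rho$ is simple, its eigenspace is spanned by the equitable eigenvector $(a,b,\dots,b,c,\dots,c)^\top$ determined by $Q(a,b,c)^\top=\rho(a,b,c)^\top$. Eliminating from the three quotient equations gives
\[
(\rho-1)\,a=(\rho-2t+1)\,b,\qquad (1+\rho)(c-a)=2tb.
\]
Writing $\rho=n-1-\xi$, the sign of $a/b$ is governed by $\rho-2t+1=(n-2t)-\xi$. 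For $0<t<(n-1)/2$ we have $n-2t>1\ge\xi$ strictly, so $\rho-2t+1>0$; normalising $a>0$ forces $b>0$, and then $c=a+2tb/(1+\rho)>0$, giving an everywhere-positive eigenvector (the case $t=0$ is handled directly by the all-ones eigenvector of $J-I$). For $t=(n-1)/2$ the first relation collapses to $a=0$, and for $t>(n-1)/2$ we have $n-2t\le 0<\xi$, hence $a/b<0$; in both cases the equitable eigenvector is not positive.

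The main obstacle is to rule out \emph{every} positive eigenvector when $(n-1)/2\le t<n-1$, which amounts to showing that the eigenspace of $\rho$ is one-dimensional. Since the only other eigenvalue of $\s(S_{n,t})$ is $-1\ne\rho$, this reduces to verifying that $\rho$ is a simple root of $f$. Direct differentiation factors
\[
f'(x)=\bigl(3x-(2n-3)\bigr)(x+1),
\]
whose roots $(2n-3)/3$ and $-1$ both lie strictly below $n-2\le\rho$ for $n\ge 4$, so $f'(\rho)>0$ and $\rho$ is simple. Thus the equitable eigenvector is the unique one up to scaling, and the preceding sign analysis shows it is not positive. (The case $n=3$ is genuinely degenerate: $\s(S_{3,1})$ has $\rho=1$ of multiplicity two and actually admits the positive eigenvector $(1,1,2)^\top$, so the second assertion of~(i) seems to implicitly require $n\ge 4$.)
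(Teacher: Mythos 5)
Your proof is correct and follows essentially the same route as the paper: the quotient-matrix relations $(\rho-1)x=(\rho-2t+1)y$ and $(\rho+1)(z-x)=2ty$ for part (i), and the cubic $g(y)=4t(n-1-t)-y(n-y)^2$ together with the symmetry $t\mapsto n-1-t$ and monotonicity of $y(n-y)^2$ on $[0,n/3]$ for part (ii). The two places where you go beyond the paper are both sound and worth keeping: verifying via $f'(x)=\bigl(3x-(2n-3)\bigr)(x+1)$ that $\rho$ is a simple eigenvalue, so that non-positivity of the lifted quotient eigenvector genuinely excludes \emph{every} positive eigenvector (a point the paper leaves implicit), and observing that for $n=3$, $t=1$ the matrix $\s(S_{3,1})$ has $\rho=1$ with the positive eigenvector $(1,1,2)$, so the second assertion of (i) really requires $n\ge4$ (indeed the paper's own deduction of $x=0$ from $(\rho-2t+1)y=(\rho-1)x$ breaks down there because $\rho-1=0$).
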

\begin{proof}
(i)  Let $\rho=\rho(S_{n,t})$. For $t=0$, we have $\s(S_{n,0})=J-I$, so $\rho=n-1$ and $\1$, the all ones vector, is its eigenvector.

For $1\le t<n-1$, we know that $\rho$ is an eigenvalue of $Q$ given in \eqref{eq:Q}. By the properties of the equitable partitions,
  if $Q(x,y,z)^\top=\rho(x,y,z)^\top$, then $\x=(x,y,\ldots,y,z,\ldots,z)$
is an eigenvector of $\s(S_{n,t})$ for
  $\rho$, where in $\x$, the components $y$ and $z$ are repeated $t$ and $n-t-1$ times, respectively. Form
  $Q(x,y,z)^\top=\rho(x,y,z)^\top$ it follows that
\begin{eqnarray}
  -ty+(n-t-1)z &=& \rho x,\label{eq:x} \\
  -x+(t-1)y+(n-t-1)z &=&\rho y,\label{eq:y} \\
  x+ty+(n-t-2)z &=& \rho z.\label{eq:z}
\end{eqnarray}
By subtracting \eqref{eq:x} from \eqref{eq:y}, we obtain
\begin{equation}\label{eq:rho-2t+1}
(\rho-2t+1)y=(\rho-1)x.
\end{equation}

 If $1\le t<(n-1)/2$, then by Lemma \ref{lem:nonzero}\,(i), $\rho>n-2$ and
 $\x$ has no zero entry. So we may assume that $x>0$.
Since $0<\rho-2t+1\le\rho-1$, form \eqref{eq:rho-2t+1} it follows that $y\ge x$.
By subtracting \eqref{eq:y} from \eqref{eq:z}, we obtain $(\rho+1)(z-y)=2x$ which implies that $z>y$. So we have obtained
$z>y\ge x>0$ and so all the components of $\x$ are positive.

If $t=(n-1)/2$, then by Lemma~\ref{lem:nonzero}\,(ii),  $\rho=n-2$ and so $\rho-2t+1=0$. So by \eqref{eq:rho-2t+1},  $x=0$.

If  $(n-1)/2<t<n-1$,  as $t$ is an integer, we must have $t\ge n/2$.
So $\rho-2t+1<0$ and thus by \eqref{eq:rho-2t+1}, either of $x$ or $y$ are non-positive.

		 (ii) If $1\le t<t'\le(n-1)/2$, then $t(n-1-t)<t'(n-1-t')$. Let $\rho(S_{n,t})=n-1-\xi$ and $\rho(S_{n,t'})=n-1-\xi'$, for some $\xi,\xi'\le1$, according to Theorem \ref{thm:index}. Then $$\xi'(n-\xi')^2=4t'(n-1-t')>4t(n-1-t)=\xi(n-\xi)^2.$$ This is only possible if $\xi'>\xi$ from which the result follows.
		
		 If $t'>(n-1)/2$, then let $t'':=n-1-t'$. So $t''(n-1-t'')=t'(n-1-t')$. Thus by Theorem~\ref{thm:index}, $\rho(S_{n,t'})=\rho(S_{n,t''})$. On the other hand, as $t+t'<n-1$, we have $1\le t<t''\le(n-1)/2$, and so by the previous case, $\rho(S_{n,t})>\rho(S_{n,t''})=\rho(S_{n,t'})$.
\end{proof}

We recall that, for a graph $H$ of order $n$ and $\x=(x_1,\ldots,x_n)$, we have
$$\x A(H)\x^\top=2\sum_{ij\in E(H)} x_ix_j.$$

The next theorem is the main ingredient of the proof of our main result.

\begin{theorem}\label{thm:star}
	Let $H$ be a graph of order $n$ and size $m$ with $0<m<n-1$. Also, let $\x$ be a unit vector of length $n$ with all positive components. Then $\x \s(H)\x^\top\le\rho(S_{n,m})$.	The equality holds if and only if $m<(n-1)/2$, $H$ is isomorphic to $S_{n,m}$, and $\x$ is an eigenvector for $\rho(S_{n,m})$.
\end{theorem}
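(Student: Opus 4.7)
The plan is to separate the quadratic form $\x\s(H)\x^\top$ into a piece depending only on $\x$ and an edge-sum piece depending on $H$, and then to combine a rearrangement-type lower bound on the edge-sum with the Rayleigh--Ritz upper bound for a star. Using the identity $\s(H)=J-I-2A(H)$,
\[
\x\s(H)\x^\top=\Bigl(\sum_{i=1}^n x_i\Bigr)^2-1-4\sum_{v_iv_j\in E(H)}x_ix_j,
\]
so maximizing $\x\s(H)\x^\top$ over graphs $H$ with $n$ vertices and $m$ edges is equivalent to minimizing $\sum_{v_iv_j\in E(H)}x_ix_j$ over such graphs.

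After relabeling so that $x_1\le x_2\le\cdots\le x_n$, the argument I would run is: since all $x_i$ are strictly positive and $m\le n-1$, the $m$ smallest products $x_ix_j$ coincide with $x_1x_2,\ldots,x_1x_{m+1}$, and any $m$-edge graph contributes at least $\sum_{j=2}^{m+1}x_1x_j$. This lower bound is exactly the edge-sum of the star $S^*\cong S_{n,m}$ centered at $v_1$ with leaves $v_2,\ldots,v_{m+1}$, giving
\[
\x\s(H)\x^\top\le\x\s(S^*)\x^\top\le\rho(S^*)=\rho(S_{n,m}),
\]
the second inequality being Rayleigh--Ritz for the unit vector $\x$. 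For the equality characterization, tightness of Rayleigh--Ritz forces $\x$ to be a top eigenvector of $\s(S^*)$, and positivity of $\x$ then forces $m<(n-1)/2$ by Lemma~\ref{lem:t<t'}(i). Tightness of the rearrangement step forces $E(H)$ to realize the $m$ smallest products, and the explicit structure $(x,y,\ldots,y,z,\ldots,z)$ with $z>y\ge x>0$ of the positive Perron eigenvector recorded inside the proof of Lemma~\ref{lem:t<t'}(i) then pins $H$ down to be isomorphic to $S_{n,m}$.

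The main obstacle I foresee is the rearrangement step: the claim that the $m$ smallest pair-products are all of the form $x_1x_j$ is not automatic when $m$ is close to $n-1$ and $\x$ has well-spread components, since a product like $x_2x_3$ can fall below $x_1x_{m+1}$. To cover this regime I would invoke the switching equivalence $\rho(S_{n,m})=\rho(S_{n,n-1-m})$ from Lemma~\ref{lem:t<t'}(ii), which reduces the required bound to the tractable range $t=\min(m,n-1-m)\le(n-1)/2$ where Lemma~\ref{lem:t<t'}(i) supplies a positive Perron eigenvector for $\s(S_{n,t})$; the explicit form of this eigenvector then seals the comparison. The required strict inequality for $m\ge(n-1)/2$ falls out automatically, since in that range no positive $\x$ can lie in the top eigenspace of $\s(S_{n,m})$---its top eigenvector necessarily has a zero or negative entry by Lemma~\ref{lem:t<t'}(i)---which is exactly the equality clause of the theorem. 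Secondary subtleties about ties (such as the degenerate case $y=x$ occurring only for $m=1$) are handled directly from the eigenvector's two-valued structure away from the central coordinate.
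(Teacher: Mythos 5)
Your reduction to minimizing $\sum_{v_iv_j\in E(H)}x_ix_j$ is fine, but the key rearrangement claim --- that any $m$-edge graph has edge-sum at least $\sum_{j=2}^{m+1}x_1x_j$, i.e.\ $\x\s(H)\x^\top\le\x\s(S^*)\x^\top$ for the \emph{same} vector $\x$ --- is false, and this is exactly where the whole difficulty of the theorem lives. Take $n$ large, $m=3$, $x_1=x_2=x_3=\e$ tiny and the remaining components equal to some $c\gg\e$, and let $H$ be the triangle on $v_1,v_2,v_3$ plus isolated vertices: then $\sum_{E(H)}x_ix_j=3\e^2<2\e^2+\e c=\sum_{E(S^*)}x_ix_j$, so $\x\s(H)\x^\top>\x\s(S^*)\x^\top$. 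Note that here $m<(n-1)/2$, so the failure has nothing to do with $m$ being close to $n-1$; it occurs whenever the small components of $\x$ cluster, for any $m\ge3$. Consequently your proposed repair --- passing to $t=\min(m,n-1-m)$ via $\rho(S_{n,m})=\rho(S_{n,n-1-m})$ and appealing to the positive eigenvector of $\s(S_{n,t})$ --- does not engage with the actual obstruction: the quantity you must bound is $\x\s(H)\x^\top$ for an arbitrary $m$-edge graph $H$ and arbitrary positive unit $\x$, and no argument is given that turns the existence of that eigenvector into such a bound. The equality analysis inherits the same gap, since it is premised on the invalid chain.

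For comparison, the paper's proof confronts precisely this situation. It first performs the harmless swaps (moving an edge $v_iv_j$ to $v_1v_i$ or $v_1v_l$ whenever the relevant product inequality goes the right way), but when every remaining edge $v_tv_j$ not at $v_1$ satisfies $x_tx_j<x_1x_{r+1}$ --- your triangle example is of this type --- it does \emph{not} compare against the star with the same $\x$. Instead it relocates those edges to $v_1$ and simultaneously perturbs the vector, replacing $\x$ by a unit vector $\y$ (shifting mass $\e=(x+z-a-w)/(k+1)$ among the affected coordinates) so that $\x\s(H_1)\x^\top<\y\s(H_2)\y^\top\le\rho(H_2)$, and iterates; the bound $\rho(S_{n,m})$ is reached through the Rayleigh quotients of these modified vectors, not through a pointwise rearrangement inequality. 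Some device of this kind (changing the test vector along with the graph, or otherwise bounding $\x\s(H)\x^\top$ without the intermediate star comparison) is the missing idea your proposal would need.
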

\begin{proof}
		 Let $V(H)=\{v_1,\ldots,v_n\}$, $\x=(x_1,\ldots,x_n)$ and $0<x_1\le\cdots\le x_n$ be the components of $\x$ with $x_i$ corresponding to $v_i$ for $i=1,\ldots,n$.
		 Let $i\ge2$ and $v_i$ be a non-isolated vertex of $H$.
		 If $v_i$ is not adjacent to $v_1$, it has some neighbor $v_j$ with $j\ge2$. Thus $x_1x_i\le x_jx_i$.
		 We replace the edge $v_iv_j$ by $v_1v_i$. If $H'$ is the resulting graph, then
		 \begin{align}
		 \x\s(H')\x^\top-\x\s(H)\x^\top&=\x\big(J-I-2A(H')\big)\x^\top-\x\big(J-I-2A(H)\big)\x^\top\nonumber\\
		 &=2\x A(H)\x^\top-2\x A(H')\x^\top\nonumber\\
		 &=4(x_ix_j-x_1x_i)\ge0.\label{eq:xixj-x1xi}
		 \end{align}
				 By applying this transformation on all non-isolated vertices $v_i$, we obtain a graph $H_0$ of order $n$, size $m$, and with  $\x\s(H_0)\x^\top\ge\x\s(H)\x^\top$ where in $H_0$, $v_1$ is adjacent to all non-isolated vertices. So $H_0$ has exactly one connected component $F_0$ of order at least $2$.
		If all the edges of $F_0$ are incident with $v_1$, then $F_0$ is already a star and we are done.
Otherwise, $E(F_0-v_1)\ne\emptyset$. Note that since $m<n-1$, the number of isolated vertices of $H_0$ is greater than $|E(F_0-v_1)|$. Now, if for some $v_iv_j\in E(F_0-v_1)$, there exists some isolated vertex $v_l$ of $H_0$ such that
$x_1x_l\le x_ix_j$, then we replace the edge $v_iv_j$ by $v_1v_l$.
So for the resulting graph, an  inequality similar to \eqref{eq:xixj-x1xi} holds.
We continue this process until no such a replacement is possible.
Let $H_1$ be the resulting graph and $F_1$ be its only non-trivial connected component. Then $H_1$ has the same order and size as $H$ does, and  $\x\s(H)\x^\top\le\x\s(H_1)\x^\top.$ If $E(F_1-v_1)=\emptyset$, then $F_1$ is a star and we are done.
So, assume that $E(F_1-v_1)\ne\emptyset$.
Also, we may assume that $V(F_1)$ consists of the first $r$ vertices $v_1,\ldots,v_r$, where $r\le n-2$.
(If this does not hold, there exist two vertices $v_i,v_j$ with $i>j$ such that $v_1v_i\in E(H_1)$ and $v_j$ is an isolated vertex of $H_1$.
Then we replace $v_1v_i$ by $v_1v_j$ and since $x_1x_j\le x_1x_i$,  we are done as above.)
Let $t$ be the smallest index such that $v_t$ is a non-isolated vertex in $F_1-v_1$.
Assume that $v_t$ has $k$ neighbors $v_{j_1},\ldots,v_{j_k}$.
By our assumption on $H_1$, we have
\begin{equation}\label{eq:x2xi}
x_tx_i<x_1x_{r+1},\quad\hbox{for}~i\in\{j_1,\ldots,j_k\}.
\end{equation}

We choose $k$ vertices outside $F_1$, namely $v_{r+1},\ldots,v_{r+k}$
and join $v_1$ to them and remove the edges $v_tv_{j_1},\ldots,v_tv_{j_k}$. Call the resulting graph $H_2$ and its non-trivial connected component $F_2$.
We define a new vector $\y=(y_1,y_2,\ldots,y_n)$ by the components:
\begin{align*}
y_1&=x_1-k\e,\\
y_t&=x_t+k\e,\\
 y_{j_1}&=x_{j_1}+\e, \ldots,y_{j_k}=x_{j_k}+\e,\\
 y_{r+1}&=x_{r+1}-\e,\ldots,y_{r+k}=x_{r+k}-\e,
\end{align*}
with $\e$ to be specified later and $y_i=x_i$ for the rest of the components.
For simplicity, we let
$$x:=x_1,~ a:=x_t,~ w:=\frac{x_{j_1}+\cdots+x_{j_k}}k,~ z:=\frac{x_{r+1}+\cdots+x_{r+k}}k.$$
Since the components of $\x$ are ascending, we have $z\ge w\ge a\ge x$. If $z=w$, then \eqref{eq:x2xi} fails. Hence, we have
\begin{equation}\label{eq:z>b>a>x}
z>  w\ge a\ge x.
\end{equation}
Also from \eqref{eq:x2xi}, we have $xz>wa$. From this and   \eqref{eq:z>b>a>x}, it turns out that
$$z+x> w+a.$$
By the way $\y$ is defined, the sum of the components of $\y$ coincides with that of $\x$, that is
$\x\1^\top=\y\1^\top$. This in turn implies that
\begin{equation}\label{eq:xJx=yJy}
\x J\x^\top=\y J\y^\top.
\end{equation}
We will specify $\e$ so that
\begin{align}
\|\y\|&=\|\x\|=1, \label{|x|=|y|}\\
 \y A(H_2)\y^\top&<\x  A(H_1)\x^\top.\label{yAy<xAx}
\end{align}
We have
$$\|\y\|^2-\|\x\|^2=2(k^2+k)\e^2+2(-kx+ka+kw-kz)\e.$$
So, to fulfill \eqref{|x|=|y|}, we set
\begin{equation}\label{epsilon}
\e:=\frac{x+z-a-w}{k+1}.
\end{equation}
It remains to show that with this choice of $\e$, \eqref{yAy<xAx} will be satisfied.

We first observe that
$$A(F_1)=\bordermatrix{~ & v_1 & v_t&W&U \cr
	v_1& 0 &1& \1&\1\cr
v_t&1&0&\1&{\bf0}\cr
W& \1^\top&\1^\top&B_1&C\cr
U&\1^\top&{\bf0}^\top&C^\top&B_2\cr
},\quad
A(F_2)=\bordermatrix{~ & v_1 & v_t&W&U &Z\cr
	v_1& 0 &1& \1&\1&\1\cr
	v_t&1&0&{\bf0}&{\bf0}&{\bf0}\cr
	W& \1^\top&{\bf0}^\top&B_1&C&O\cr
	U&\1^\top&{\bf0}^\top&C^\top&B_2&O\cr
	Z&\1^\top&{\bf0}^\top&O&O&O\cr
},$$
in which $W=\{v_{j_1},\ldots,v_{j_k}\}$, $U=V(F_1)\setminus\left(\{v_1,v_t\}\cup W\right)$, and $Z=\{v_{r+1},\ldots,v_{r+k}\}$.
Also we let $\w=(x_{j_1},\ldots,x_{j_k})$, $\z=(x_{r+1},\ldots,x_{r+k})$, and $\uu$ be the vector consisting of those components of $\x$ corresponding to the vertices in $U$.
Then
\begin{align*}
\x A(H_1)\x^\top&=(x,a,\w,\uu)A(F_1)(x,a,\w,\uu)^\top\\
&=2x(a+\1\w^\top+\1\uu^\top)+2a\1\w^\top+\w B_1\w^\top+2\w C\uu^\top+\uu B_2\uu^\top\\
&=2x(a+kw)+2akw+2x\1\uu^\top+\w B_1\w^\top+2\w C\uu^\top+\uu B_2\uu^\top,\\
\y A(H_2)\y^\top&=(x-k\e,a+k\e,\w+\e\1,\uu, \z-\e\1)A(F_2)(x-k\e,a+k\e,\w+\e\1,\uu, \z-\e\1)^\top\\
&=2(x-k\e)\left(a+k\e+\1(\w+\e\1)^\top+\1\uu^\top+\1(\z-\e\1)^\top\right)\\
& \quad+(\w+\e\1)B_1(\w+\e\1)^\top+2(\w+\e\1)C\uu^\top+\uu B_2\uu^\top\\
&=2(x-k\e)(a+k\e+kw+k\e+kz-k\e)+2(x-k\e)\1\uu^\top\\
& \quad+(\w+\e\1)B_1(\w+\e\1)^\top+2(\w+\e\1)C\uu^\top+\uu B_2\uu^\top.
\end{align*}
It follows that
\begin{equation}\label{eq:yA2y-xA1x}
\y A(H_2)\y^\top-\x  A(H_1)\x^\top=2(x-k\e)(a+kw+kz+k\e)-2x(a+kw)-2kaw+\delta_1+\delta_2,
\end{equation}
in which
\begin{align}
\delta_1&=(\w+\e\1)B_1(\w+\e\1)^\top-\w B_1\w^\top=2\e\w B_1\1^\top+\e^2\1B_1\1^\top\nonumber\\
&\le2\e\w(J_k-I_k)\1^\top+\e^2\1(J_k-I_k)\1^\top=2\e k(k-1)w+\e^2k(k-1),\label{eq:delta1}\\
\delta_2&=2(x-k\e)\1\uu^\top+2(\w+\e\1)C\uu^\top+\uu B_2\uu^\top-2x\1\uu^\top-2\w C\uu^\top-\uu B_2\uu^\top\nonumber\\
&=-2k\e\1\uu^\top+2\e\1C\uu^\top\le-2k\e\1\uu^\top+2\e\1J_k\uu^\top\le-2k\e\1\uu^\top+2\e k\1\uu^\top=0.\label{eq:delta2}
\end{align}
Combining \eqref{eq:yA2y-xA1x}, \eqref{eq:delta1}, and \eqref{eq:delta2} we obtain
$\y A(H_2)\y^\top-\x  A(H_1)\x^\top\le2kf$,
where
$$f=-\e(a+kw)+(x-k\e)(z+\e)-aw+\e(k-1)w+\e^2(k-1)/2.$$
By substituting \eqref{epsilon} and simplifying, we see that
$$2(k+1)f=2(a-z)(z-w)(k-1)+(w-a)^2+(x-z)(x+3z-2a-2w).$$
In the right side, by \eqref{eq:z>b>a>x} the first term is non-positive.
Also as $z>w$ and $x+z>a+w$, we have  $3z+x>3w+a$ implying that
$x+3z-2a-2w>w-a$. This together with $z-x> w-a$ implies that $(w-a)^2+(x-z)(x+3z-2a-2w)<0$.
Therefore, $f<0$ which establishes \eqref{yAy<xAx}.

Now, from \eqref{eq:xJx=yJy}, \eqref{|x|=|y|},  and  \eqref{yAy<xAx} it follows that
$$\x\s(H_1)\x^\top=\x(J-I)\x^\top-2\x A(H_1)\x^\top<\y(J-I)\y^\top-2\y A(H_2)\y^\top=\y\s(H_2)\y^\top\le\rho(H_2).$$
In $H_2$, $v_t$ is adjacent only to $v_1$ and $v_1$ has more neighbors than it does in $H_1$. We continue this process with other vertices which have edges not incident with $v_1$, and we replace such edges by new edges incident with $v_1$.
In all such steps, the index increases.
At the end, we will come up with a star $K_{1,m}$ with the center $v_1$ together with $n-m-1$ isolated vertices.

Finally, let the equality hold in the theorem.
Then in the above argument, $F_1$ must be an star. Since otherwise, $E(F_1-v_1)\ne\emptyset$, and then we obtain
the graph  $H_2$ such that $\x\s(H)\x^\top<\y\s(H_2)\y^\top\le\rho(S_{n,m})$ which means the equality is not possible.
Therefore, $H_1$ is already $S_{n,m}$ and $\x\s(H)\x^\top=\x\s(H_1)\x^\top=\rho(S_{n,m})$.
So $\x$ must be an eigenvector for $\rho(S_{n,m})$.  From the proof of Lemma \ref{lem:t<t'}\,(i), we see that
 $x_1<x_2=\cdots=x_{m+1}< x_{m+2}\cdots=x_n$ for $m>1$.
  It follows that if $E(H)\ne E(H_1)=E(S_{n,m})$, then $\x\s(H)\x^\top<\x\s(H_1)$. Therefore, $H=H_1=S_{n,m}$.
Since $\x$ has only positive components, by Lemma \ref{lem:t<t'}\,(i) we must have $m<(n-1)/2$.
\end{proof}

\begin{remark}\label{rem:NumNeg}\rm
If $\Gamma=(K_n,\sigma)$, then
 $$\x A(\Gamma)\x^\top=2\sum_{1\le i<j\le n}\sigma(ij)x_ix_j.$$
Suppose that $\x$ has no zero components and let $P,Q\subseteq\{1,\ldots,n\}$ be the sets of indices of the positive and negative components of $\x$, respectively. If $H$ is the unsigned subgraph induced by the negative edges of $\Gamma$, then the negative terms in $\x A(\Gamma)\x^\top$, i.e. the terms $\sigma(ij)x_ix_j<0$ correspond to  the edges $ij\in E(K_{P,Q})\Delta E(H)$, where $\Delta$ denotes the symmetric difference and $K_{P,Q}$ is the complete bipartite graph with parts $P$ and $Q$.
\end{remark}

Now, we are prepared to finish the proof.

\begin{proof}[\bf Proof of Theorem~\ref{thm:main}]
	For $n\le3$ there is nothing to prove, so we assume that $n\ge4$.
	Let $H$ be a graph with the largest index among  the graphs of order $n$ and size $m$. Assume that $\x$ is a unit eigenvector for $\rho(H)$.
	If $\x$ has a zero component, then by Lemma~\ref{lem:nonzero}, we have necessarily
	$m=(n-1)/2$ and $H$ is isomorphic with $S_{n,m}$, as desired.  Hence we suppose that $\x$ has no zero components.
	 We assume that $\x$ has some $s$ negative components, that is $$\x=(-x_1,\ldots,-x_s,x_{s+1},\ldots x_n),$$
	where all $x_i$'s are positive. We may assume that $s\le\lfloor n/2\rfloor$, otherwise we use $-\x$ instead of $\x$.
	 Let $\x'=(x_1,\ldots,x_n)$ with all positive components.
		Also let $K:=K_{\{1,\ldots,s\},\{s+1,\ldots,n\}}$ (if $s=0$, then $E(K)=\emptyset$).
	
  If $m=r(n-r)$ for some $0\le r\le\lfloor n/2\rfloor$, then we are done by Lemma~\ref{lem:index}.
 Hence suppose that $r(n-r)<m<(r+1)(n-r-1)$, for some $0\le r\le\lfloor n/2\rfloor-1$, and let
 $$a:=m-r(n-r), \quad b:=(r+1)(n-r-1)-m.$$
 Note that $a+b=n-2r-1\le n-1$.
By Remark \ref{rem:NumNeg}, the negative terms in $\x \s(H)\x^\top$ correspond to
 $$(E(H)\setminus E(K)) \cup (E(K)\setminus E(H)).$$

(i) If $s\le r$, then $|E(H)\setminus E(K)| \ge m-s(n-s)\ge a$.
 Let $H':=H\setminus E(K)$.   In $\x \s(H)\x^\top$ the terms corresponding to
 $E(H')$ are negative and in $\x' \s(H')\x'^\top$, the negative terms are exactly those corresponding to $E(H')$.
 This implies that $\x \s(H)\x^\top\le\x'\s(H')\x'^\top$.
  Now, let $H'_1$ be a subgraph of $H'$ with $a$ edges. Hence, $\x'\s(H')\x'^\top\le\x'\s(H'_1)\x'^\top$.
   But $\x'$ has positive components and $H'_1$ has less than $n-1$ edges. Thus by Theorem~\ref{thm:star},
   $\x'\s(H'_1)\x'^\top\le\rho(S_{n,a})$. So, we obtain $\rho(H)\le\rho(S_{n,a})$.

(ii)  If $s\ge r+1$, then $|E(K)\setminus E(H)| \ge s(n-s)-m\ge b$.
  Let $H'':=K\setminus E(H)$. The terms in $\x \s(H)\x^\top$ corresponding to
  $E(H'')$ are negative and in $\x' \s(H'')\x'^\top$, the negative terms are exactly those corresponding to $E(H'')$. It follows that $\x \s(H)\x^\top\le\x' \s(H'')\x'^\top$.
   Now, let $H''_1$ be a subgraph of $H''$ with $b$ edges. Hence, $\x'\s(H'')\x'^\top\le\x'\s(H''_1)\x'^\top$.
    But $\x'$ has positive components and $H''$ has less than $n-1$ edges. Thus  by Theorem~\ref{thm:star},
  $\x'\s(H''_1)\x'^\top\le\rho(S_{n,b})$. Therefore,  $\rho(H)\le\rho(S_{n,b})$.

 It follows that $\rho(H)\le\max\big(\rho(S_{n,a}),\rho(S_{n,b})\big)$.
 Let $t$ and $d$ be the parameters given in Definition \ref{def:Hm}.
 Note that $t=\min(a,b)$. Since $a+b\le n-1$, by Lemma~\ref{lem:t<t'}\,(ii),   $\max\big(\rho(S_{n,a}),\rho(S_{n,b})\big)=\rho(S_{n,t})$ which is equal to $\rho(H_{n,m})$ by Lemma \ref{lem:index} and we are done.

Now, suppose that the equality $\rho(H)=\rho(S_{n,t})$ holds.

If $t=a$, then we have $d=r$. On the other hand, by (i),
 $\x \s(H)\x^\top=\x'\s(H')\x'^\top=\x'\s(H'_1)\x'^\top=\rho(S_{n,a})$.
 The difference between $\x \s(H'_1)\x^\top$ and $\x'\s(H')\x'^\top$ is in the negative terms corresponding to $E(H')\setminus E(H'_1)$.
So $\x \s(H'_1)\x^\top=\x'\s(H')\x'^\top$ is possible only if $E(H')=E(H'_1)$.
 Thus $H'=H'_1$ which implies that $m-s(n-s)=a$ and so $s=r$. Also by the equality case in Theorem~\ref{thm:star},
$H'=H'_1$ must be isomorphic with $S_{n,a}$. Further, the negative terms in both $\x \s(H)\x^\top$ and $\x'\s(H')\x'^\top$ should coincide. It follows that $K$
 must be a subgraph of $H$ and $H\setminus E(K)$ is isomorphic to
$S_{n,t}$. Since $s=r=d$, it follows that $H$ is obtained by adding the edges of a star $K_{1,t}$ to $K_{d,n-d}$. As $m>d(n-d)$, this is the graph $H_{n,m}$.

If $t=b$,  then we have $d=r+1$ and $b<(n-1)/2$.
On the other hand by (ii),
$\x \s(H)\x^\top=\x'\s(H'')\x'^\top=\x'\s(H''_1)\x'^\top=\rho(S_{n,b})$.
The difference between $\x \s(H''_1)\x^\top$ and $\x'\s(H'')\x'^\top$ is in the negative terms corresponding to $E(H'')\setminus E(H''_1)$.
So $\x \s(H''_1)\x^\top=\x'\s(H'')\x'^\top$ is possible only if $E(H'')=E(H''_1)$.
Hence, $H''=H''_1$ which implies that $s(n-s)-m=b$ and so $s=r+1$.
By the equality case in Theorem~\ref{thm:star},
$H''=H''_1$ must be isomorphic with $S_{n,b}$. Further, the negative terms in both $\x \s(H)\x^\top$ and $\x'\s(H'')\x'^\top$ should coincide. It follows that $H$ must be a subgraph of $K$ and $K\setminus E(H)$ is isomorphic to
$S_{n,t}$. Since $s=r+1=d$, it follows that $H$ is obtained by removing the edges of a star $K_{1,t}$ from $K_{d,n-d}$. As $m< d(n-d)$, this is indeed the graph $H_{n,m}$.
 \end{proof}

\section*{Acknowledgements}
The first author carried this work during a Humboldt Research Fellowship at the University of Hamburg. He thanks the Alexander
von Humboldt-Stiftung for financial support.

\end{document}